\newtheorem{thm}{Theorem}[section]
\newtheorem{cor}[thm]{Corollary}
\newtheorem{lem}[thm]{Lemma}
\newtheorem{defi}[thm]{Definition}
\newtheorem{examp}[thm]{Example}
\theoremstyle{remark}\newtheorem{remark}{Remark} }
\begin{document}

\title{The Second Generalization of the Hausdorff Dimension Theorem for Random Fractals}
\date{September 5, 2021}
\maketitle
\begin{center}
\author {Mohsen Soltanifar\footnote{Biostatistics Division, Dalla Lana School of Public Health, University of Toronto, Toronto, ON, Canada\\
e-mail: mohsen.soltanifar@alum.utoronto.ca \\
ORCID: https://orcid.org/0000-0002-5989-0082}}
\end{center}

\begin{abstract}
In this paper, we present a second partial solution for the problem of cardinality calculation of the set of fractals for its subcategory of the random virtual ones. Consistent with the deterministic case, we show that for the given quantities of Hausdorff dimension and Lebesgue measure, there are aleph-two virtual random fractals with almost surely Hausdorff dimension of a bivariate function of them and the expected Lebesgue measure equal to the later one. The associated results for three other fractal dimensions are similar to the case given for the Hausdorff dimension. The problem remains unsolved for the case of non-Euclidean abstract fractal spaces.
\end{abstract}
%%%%%%%%%%%%%%%%%%%%%%%%%%%%%%%%%%%%%%%%%%%%%%%%%%%%%%%%%%
																												
\textbf{Keywords}   Random Fractals, Fat Fractal Perculation, Hausdorff dimension, Packing dimension, Assouad dimension, Box dimension, Existence, Aleph-two\\

\textbf{Mathematics Subject Classification (2020).}  28A80, 11K55, 03E10, 49J55
%%%%%%%%%%%%%%%%%%%%%%%%%%%%%%%%%%%%%%%%%%%%%%%%%%%%%%%%%%
 
\section{Introduction}

\subsection{Random Fractals}
Random fractals have emerged as the natural expansions of the deterministic fractals and their introductory mathematical treatment started with the works of Mandelbrot in early 1970s \cite{I1,I2}. Later, their rigorous mathematical treatment were solidified with the works of Taylor, Falconer and Graf in mid 1980s \cite{I3,I4,I5}. These fractals use stochastic processes rather than deterministic processes in their constructions and are characterized by statistical self-similarity rather than the deterministic precise self-similarity. In details, the small component parts of the fractal have the same probability distribution as the whole fractal.  Some of their applications have emerged in financial markets, cosmogeny and image synthesis. As in their deterministic counterparts, random fractals are divided in two categories: the real random fractals and the virtual random fractals. Some examples of the real word random fractals include the human brain surface and the coastlines of the British Isles. Others in the virtual random fractals category include random Cantor sets, random von Koch curves and Galton-Watson fractals.

\subsection{Existence}
Existence of abstract mathematical objects and its mathematical philosophy importance have long been the subject of interest among mathematicians and the first prominent argument for them roots to the work of Frege in early 1950\cite{I6}. Furthermore, it has long been discussed among mathematicians that whether existence of mathematical objects implies the possibility of their constructions,  and whether there is systematic approach to construction itself \cite{I7,I8,I9}. Also, applications of the constructive mathematical objects in many branches of mathematics and computer science add to the prominence of their investigations \cite{I10,I11}. In the case of fractals, this prominence increases given their applications in many scientific fields. Sets in fractal geometry are often characterized by their sizes measured by the fractal dimensions (e.g.,  Hausdorff dimension) and the space measures (e.g., Lebesgue measure). Hence, the investigation of existence of fractals is formulated in terms of their fractal dimension and the associated space measure. 

\subsection{Motivation}
Till mid 2000s, literature in fractal geometry has mostly been focused in calculating fractal dimensions and space measures of the discussed objects without retrospective investigation of their existence for these two given quantities. The earliest treatment of the problem of retrospective existence of fractals in terms of their fractal dimension have been through the works of Sharapovs and Soltanifar \cite{I12,I13}. The later work has expanded the former one and presented such existence as the Hausdorff Dimension Theorem (HDT) limited to deterministic thin virtual fractals with provision of cardinality of continuum. Similar results to the HDT have been reported in late 2010s in the works of Squillace and Gryszka \cite{I14,I15}. The HDT has recently been generalized to deterministic fat virtual fractals with provision of cardinality of aleph-two coined as the Generalized Hausdorff Dimension Theorem (GHDT), \cite{I16}. However, no information is available on the counterpart existence statement for the case of random fractals. Moreover, the current results in the deterministic realm are limited to the Hausdorff dimension and their validity is unclear for other fractal dimensions. Finally, there are still open questions whether the cardinality of the set of fractals depends to their deterministic or random nature  as well as the applied fractal dimension in counting their set\cite{I16}. 

\subsection{Study Outline}
This work offers a counterpart existence result for random virtual fractals for given fractal dimension and the expected Lebesgue measure in n-dimensional Euclidean spaces. It also provides answer to above posed questions regarding independency of the cardinality of the set of virtual fractals and their deterministic or random status and the applied fractal dimensions. The work outline is as follows: First, it introduces the fat fractal perculations (FFP) and summarizes some of their topological properties and fractal dimensions. The ancillary proofs are furnished for establishing these properties in the Appendix section. Second, it establishes two key lemmas regarding the cardinality of the power set of the surviving FFP and the case of HDT for random fractals. Finally, using the mentioned two lemmas, it establishes the random counterpart of the GHDT coined as the Second Generalized Hausdorff Dimension Theorem (SGHDT).

%%%%%%%%%%%%%%%%%%%%%%%%%%%%%%%%%%%%%%%%%%%%%%%%%%%%%%%%%%%%%%%%%%%%%%%%

\section{Preliminaries}
The reader who has studied random fractals is well-equipped with the following 
definitions and key properties of the fat fractal perculation (FFP). The summary of definitions and some key properties of the Hausdorff dimesnion and the topological dimension (denoted by $\dim_H(.),\dim_{ind}(.),$ respectively) are presented in \cite{I16}. Furthermore, the summary and key properties of the Packing dimension, Assouad dimension and the Boxing dimension (denoted by $\dim_P(.),\dim_A(.),\dim_B(.),$ respectively) are presented in \cite{P1,P2}.    Henceforth, in this paper we consider the n-dimensional unit cube $I_n (n\geq 1)$ in the Euclidean space $\mathbb{R}^n$ with its conventional Euclidean metric and the Lebesgue measure of  $\lambda_n(.).$\par 

The earliest ideas of the FFP root to Mandelbrot's work in 1982 \cite{P3}. The construction process of the FFP is as follows: 
Given a dimension $n\geq 1,$ an index parameter $m\geq 2$ and a non-decreasing sequence $\overrightarrow{p}=\{p_k\}_{k=1}^{\infty}$ in $(0,1].$ Let $C_{n,m,\overrightarrow{p}}(0)$ be the unit cube $I_n.$ Divide $C_{n,m,\overrightarrow{p}}(0)$ into $m^n$ equal closed subcubes, each with length $m^{-1},$ and, then select each subcube independently with probability of $p_1.$ Denote the union of chosen subcubes as $C_{n,m,\overrightarrow{p}}(1).$ Continuing this process, at the stage $k>1,$ divide each subcube of $C_{n,m,\overrightarrow{p}}(k-1)$ into $m^n$ equal closed subcubes, each with length $m^{-k},$ and, then select each subcube independently with probability of $p_k.$ Similar to the case $k=1,$ denote the union of chosen subcubes as $C_{n,m,\overrightarrow{p}}(k).$ The sequence of random closed sets $\{C_{n,m,\overrightarrow{p}}(k)  \}_{k=0}^{\infty}$ is decreasing. 
\begin{defi}
\label{Def1}
The fat fractal perculation(FFP) $C_{n,m,\overrightarrow{p}}$ associated with dimension $n\geq 1,$  index parameter $m\geq 2$ and the non-decreasing sequence $\overrightarrow{p}$ is defined as:
\begin{equation}
\label{eqp1}
C_{n,m,\overrightarrow{p}}=\bigcap_{k=0}^{\infty} C_{n,m,\overrightarrow{p}}(k)
\end{equation}
\end{defi}
\noindent To study FFP in Definition \ref{Def1}, we consider the probability space $(\Omega_n,\mathcal{F}_n,P_p) ,  (n\geq 1)$ with the following characteristics.  Let $\Omega_n=I_n,$ and $\mathcal{F}_n (n\geq 1)$ be its space of compact subsets. We define a natural product probability measure $P_p$ as follows: let $D_{n,m,k}(k\geq 0)$ be the set of all $(m^n)^k$ closed subcubes of $I_n$ each with side length $m^{-k}.$ For any $k\geq 1$ and each $E\in D_{n,m,k}$ define a probaility measure $P_{n,m,\overrightarrow{p}}(E)=\frac {1_{\mathcal{P}(C_{n,m,\overrightarrow{p}}(k))}(E)}{\prod_{l=1}^{k} p_l\times(m^n)^{k}},$ where $\mathcal{P}(.)$ refers to the power set. Then, by the extension theorem \cite{P4} there is a unique measure $P_p$ on $I_n$ such that $P_p(E)=P_{n,m,\overrightarrow{p}}(E),$ for all $E\in D_{n,m,k}, k\geq 1.$ We say a property holds almost surely(a.s) when it holds on a set of full $P_p$ measure. 
\begin{remark}
\label{rem1}
The probability space  $(\Omega_n,\mathcal{F}_n,P_p) ,  (n\geq 1)$ can be easily generalized for non-unit n dimensional cubes. To see this, let  $0\leq a<b,$ and $I_{n,a,b}=[a,b]^n.$ Define $\Omega_{n,a,b}=(b-a)\Omega_n+a, \mathcal{F}_{n,a,b}=\{ (b-a)E+a| E\in \mathcal{F}_n\}, $ and $P_{p,a,b}$ by $P_{p,a,b}(E)=P_p(\frac{E-a}{b-a}), (E\subseteq I_{n,a,b}).$ Then, $(\Omega_{n,a,b},\mathcal{F}_{n,a,b},P_{p,a,b}) $ is a probability space. Trivially,$(\Omega_n,\mathcal{F}_n,P_p)=(\Omega_{n,0,1},\mathcal{F}_{n,0,1},P_{p,0,1}) ,  (n\geq 1)$. In the upcoming results, we will use this probability space whenever it is useful within its context.
\end{remark}

\begin{remark}
We refer to the set of random fractals in $\mathbb{R}^n$ via random fractals charactreized by the probability space $(\Omega_{n,a,b},\mathcal{F}_{n,a,b},P_{p,a,b}) $ where $-\infty<a<b<\infty$ are clear from the context of the discussion.
\end{remark}

\begin{remark}
\label{rem2}
Given  $b>0$ and $n\geq 1.$ Then, the two probability spaces $(\Omega_{n,0,b},\mathcal{F}_{n,0,b},P_{p,0,b}), $ and $(\Omega_{n,b,2b},\mathcal{F}_{n,b,2b},P_{p,b,2b})$ can be considered as two ``subspaces" of the probability space $(\Omega_{n,0,2b},\mathcal{F}_{n,0,2b},P_{p,0,2b}),$ in set theoric context. To see this, it is sufficient to consider that in the first round of above construction process all chosen  subcubes from $\Omega_{n,0,2b}$ to be chosen from  either $\Omega_{n,0,b}$ or $\Omega_{n,b,2b},$ respectively. 
\end{remark}

\begin{remark}
\label{rem3}
Given random fractal $C $  in $\mathbb{R}^n,$ both of its Lebesgue measure $\lambda_n(C)$ and fractal dimesnion $\dim_{F}(C)$ are random variables. Consistent with literature in random fractals we characterize $C$ in terms of its expected Lebesgue measure and its almost sure value of fractal dimension.
\end{remark}

We are interested in the crucial topological and geometrical characteristics of above defined FFPs(See Appendix for some of the proofs). The first question is when the FFP survives the extiction (i.e., $C_{n,m,\overrightarrow{p}}\neq \emptyset$)?The first Theorem discuss this question and shed light on some of its topological properties \cite{P5,P6,P7,P8}:

 \begin{thm}
\label{Theoremp1}
Let $C_{n,m,\overrightarrow{p}}$ be the FFP constructed as in Definition \ref{Def1}. Then, for $\alpha=\liminf\limits_{k\rightarrow \infty}(\prod_{l=1}^{k}p_l)^{\frac{1}{k}}$ and $\beta=\prod_{k=1}^{\infty}p_{k}^{m^{nk}}$we have:\\
(i)  $C_{n,m,\overrightarrow{p}}$ is empty set or non-empty with positive probability whenever $\alpha\leq m^{-n},$ or $\alpha>m^{-n},$ respectively. Also, in the later consition, it is uncountable as well.\\
(ii) $C_{n,m,\overrightarrow{p}}$ has empty interior or non-empty interior whenever $\beta=0$ or $\beta>0,$ respectively. Furthermore,  in the later condition, it equals almost surely to a union of finitely many cubes. 
\end{thm}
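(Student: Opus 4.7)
The plan is to study $C_{n,m,\overrightarrow{p}}$ as a Galton--Watson branching process in a varying environment. Let $N_k$ denote the number of selected closed subcubes that comprise $C_{n,m,\overrightarrow{p}}(k)$, so that $N_k\mid N_{k-1}$ is a sum of $N_{k-1}$ independent $\mathrm{Bin}(m^n,p_k)$ variables and $\mathbb{E}[N_k]=m^{nk}\prod_{l=1}^{k}p_l$. Because $\{p_k\}$ is non-decreasing in $(0,1]$, the limit $p^{\ast}:=\lim_k p_k$ exists and is strictly positive (bounded below by $p_1$); applying Ces\`aro's lemma to $\log p_l$ shows that the geometric means $(\prod_{l=1}^{k}p_l)^{1/k}$ actually converge to $p^{\ast}$, so $\alpha=p^{\ast}$ and the $\liminf$ in the definition is really a limit.

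For part (i), compare $p^{\ast}$ with the critical probability $m^{-n}$. If $\alpha=p^{\ast}\le m^{-n}$, then $p_k\le m^{-n}$ for every $k$; a coupling via common uniform thresholds stochastically dominates our process by the homogeneous Galton--Watson process with constant parameter $m^{-n}$, which has nondegenerate $\mathrm{Bin}(m^n,m^{-n})$ offspring of mean one and so dies out almost surely by the classical critical extinction theorem. Hence $C_{n,m,\overrightarrow{p}}=\emptyset$ a.s. If $\alpha>m^{-n}$, fix $k_0$ with $p_{k_0}>m^{-n}$; the event $\{N_{k_0}\ge 1\}$ has positive probability since $P(N_{k_0}=m^{nk_0})=\prod_{l=1}^{k_0}p_l^{m^{nl}}>0$, and on this event the process from generation $k_0$ onward stochastically dominates a homogeneous supercritical Galton--Watson process with parameter $p_{k_0}$, which survives with positive probability. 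Uncountability on survival is then the classical fact that a supercritical Galton--Watson tree has uncountably many infinite rays almost surely on non-extinction; each ray yields a nested sequence of compact cubes with diameters tending to zero and hence, by Cantor's nested compact sets theorem, a distinct point of $C_{n,m,\overrightarrow{p}}$.

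For part (ii), the crucial observation is that $C_{n,m,\overrightarrow{p}}$ has non-empty interior iff some closed subcube at some finite level is entirely contained in $C_{n,m,\overrightarrow{p}}$, equivalently iff all its descendants survive every subsequent trimming. A fixed level-$j$ subcube is fully preserved with probability $\prod_{k=1}^{\infty}p_{j+k}^{m^{nk}}$, whose negative logarithm equals $m^{-nj}\sum_{l=j+1}^{\infty}m^{nl}(-\log p_l)$; this converges (i.e.\ the preservation probability is positive) iff $\beta>0$. When $\beta=0$, a union bound over the countably many subcubes across all levels forces $P(C_{n,m,\overrightarrow{p}}\text{ has non-empty interior})=0$. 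When $\beta>0$, the inequality $-\log p_k\ge 1-p_k$ yields $\sum_{k=1}^{\infty}m^{nk}(1-p_k)<\infty$, so the expected number of subcubes trimmed at generation $k$ (bounded above by $m^{nk}(1-p_k)$) is summable, and Borel--Cantelli implies that almost surely only finitely many generations trim anything. Hence there is a random $K$ with $C_{n,m,\overrightarrow{p}}=C_{n,m,\overrightarrow{p}}(K)$, a finite union of closed subcubes of side $m^{-K}$; combined with $P(C_{n,m,\overrightarrow{p}}=I_n)\ge\beta>0$, this proves the non-empty interior claim.

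I expect the most delicate step to be the borderline case $\alpha=m^{-n}$ in part (i): whereas $\alpha<m^{-n}$ forces $\mathbb{E}[N_k]\to 0$ and gives extinction by a single Markov inequality, the critical case cannot be handled this way and must invoke the classical Galton--Watson extinction theorem (whose proof proceeds via convexity of the offspring generating function, so that the fixed-point equation $q=f(q)$ admits only $q=1$ in $[0,1]$). The remaining ingredients---stochastic coupling, Borel--Cantelli, and uncountability of the boundary of a supercritical tree---are standard and consistent with the FFP literature \cite{P5,P6,P7,P8} cited with the theorem.
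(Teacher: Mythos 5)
Your proof is essentially correct, but it is a genuinely different route from the paper's. The paper does not give a self-contained branching-process argument at all: part (i) is dispatched in one line in the Appendix as an ``immediate consequence'' of the almost-sure Hausdorff dimension formula $\dim_H(C_{n,m,\overrightarrow{p}})=^{a.s.}n+\log_m\alpha$ (Theorem \ref{Theoremp2}(ii)) together with standard properties of Hausdorff dimension (positive dimension forces non-emptiness and uncountability), and part (ii) is simply delegated to the cited literature \cite{P5,P6,P7,P8}. You instead treat the construction directly as a Galton--Watson process in varying environment: the monotonicity of $\overrightarrow{p}$ lets you identify $\alpha$ with $\lim_k p_k$ via Ces\`aro, couple the process with a homogeneous critical or supercritical Galton--Watson process to settle extinction versus survival, and handle the interior via the exact preservation probability $\prod_k p_{j+k}^{m^{nk}}$ of a level-$j$ cube together with a first-moment Borel--Cantelli argument. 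What your approach buys is completeness and robustness at the boundary: the critical case $\alpha=m^{-n}$ genuinely requires the extinction theorem and is not covered by the dimension-formula deduction (a set of Hausdorff dimension zero need not be empty), and you actually prove part (ii), including the a.s.\ ``finite union of cubes'' conclusion, rather than citing it. What the paper's route buys is brevity, at the cost of leaning entirely on the heavier Theorem \ref{Theoremp2}(ii) and external references. One small point worth tightening in your write-up: the uncountability and non-empty-interior assertions should be stated as holding almost surely conditional on non-extinction (or at least with positive probability); your coupling gives uncountability with positive probability, and upgrading to the conditional a.s.\ statement needs the standard extra step that every surviving cube spawns an independent supercritical copy of the process.
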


Next, the most two prominent quantities essential for our upcoming results are the Fractal dimension and the Lebesgue measure. We refer to Fractal dimension (denoted by $\dim_F(.)$) as one of four dimesnions: the Hausdorff dimension, the Packing dimension, the Assouad dimension or the Box dimension. The following theorem quantifies these key quantities \cite{P1,P6}:
 \begin{thm}
\label{Theoremp2}
Let $C_{n,m,\overrightarrow{p}}$ be the FFP constructed as in Definition \ref{Def1}. Then:\newline\\
(i)\ $E(\lambda_n(C_{n,m,\overrightarrow{p}})) =  \prod_{k=1}^{\infty}p_k,$ \\
(ii)\ $\dim_H(C_{n,m,\overrightarrow{p}})=^{a.s} n+   \log_m(\liminf\limits_{k\rightarrow \infty}(\prod_{l=1}^{k}p_l)^{\frac{1}{k}}),$\\
(iii)\  $\dim_P(C_{n,m,\overrightarrow{p}})=^{a.s}
 \limsup\limits_{k \rightarrow \infty} (
\frac{n+\log_m (\prod_{l=1}^{k+1}p_l)^{\frac{1}{k+1}}}{{1+\frac{1}{n}\log_m(p_{k+1}^{\frac{1}{k+1}})}} ),$\\
(iv)\ $\dim_A(C_{n,m,\overrightarrow{p}})=^{a.s} n+
 \limsup\limits_{t\rightarrow \infty} (\sup\limits_{k\geq 1}(\log_m (\prod_{l=k}^{t+k}p_l)^{\frac{1}{t+1}})),$\\
(v)  $\dim_B^{lower}(C_{n,m,\overrightarrow{p}})=\dim_H(C_{n,m,\overrightarrow{p}})$ and $\dim_B^{upper}(C_{n,m,\overrightarrow{p}})=\dim_P(C_{n,m,\overrightarrow{p}})$.
\end{thm}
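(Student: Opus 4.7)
The plan is to handle the five claims along two parallel tracks: the expected measure in (i) is a direct first-moment computation, while the four dimension statements (ii)--(v) each follow the classical random-fractal recipe of matching an upper bound by a natural cover with a lower bound by a mass-distribution / natural-measure argument, adapted to the appropriate notion of dimension.

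For (i), I would observe that $C_{n,m,\overrightarrow p}(k)$ is, at each stage, a union of $N_k$ closed cubes of side $m^{-k}$, where $N_k$ denotes the (random) number of surviving subcubes. Since each of the $(m^n)^k$ possible subcubes of $I_n$ of side $m^{-k}$ is retained with probability $\prod_{l=1}^k p_l$, we have $E[\lambda_n(C_{n,m,\overrightarrow p}(k))] = (m^n)^k \cdot m^{-nk}\cdot \prod_{l=1}^k p_l = \prod_{l=1}^k p_l$. Because $\{C_{n,m,\overrightarrow p}(k)\}$ is a decreasing sequence of compact sets, monotone convergence yields $E[\lambda_n(C_{n,m,\overrightarrow p})] = \prod_{k=1}^\infty p_k$.

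For (ii), I would use the stage-$k$ cubes as a natural $\sqrt n\, m^{-k}$-cover. For any $s > n + \log_m\alpha$ pick $\delta>0$ with $s \geq n + \delta + \log_m\alpha$; then
\begin{equation*}
E\bigl[\mathcal H^s_{\sqrt n\, m^{-k}}(C_{n,m,\overrightarrow p})\bigr] \leq (\sqrt n)^s\, E[N_k]\, m^{-ks} = (\sqrt n)^s \Bigl(m^{nk}\prod_{l=1}^k p_l\Bigr) m^{-ks},
\end{equation*}
which decays geometrically by the choice of $s$, so $\mathcal H^s(C_{n,m,\overrightarrow p})=0$ a.s. by Markov and Borel--Cantelli. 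For the matching lower bound, conditioned on non-extinction I would place on $C_{n,m,\overrightarrow p}$ the natural branching-random-measure $\mu$ that assigns mass $(m^{-nk}\prod_{l=1}^k p_l)^{-1}$-balanced to each stage-$k$ surviving subcube, then show that for $s<n+\log_m\alpha$ a second-moment computation on $\int\int |x-y|^{-s}\,d\mu(x)\,d\mu(y)$ gives finite energy a.s., whence Frostman's lemma / the mass-distribution principle forces $\dim_H(C_{n,m,\overrightarrow p})\geq s$ a.s.

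For (iii)--(v), the same cover used in (ii) already supplies the upper box-dimension bound, and since $N_k$ concentrates around $E[N_k]$ by a Chebyshev argument on consecutive stages, the lower box dimension a.s.\ matches the Hausdorff dimension, giving half of (v). The packing and Assouad formulas then follow from the general identities $\dim_B^{upper} = \dim_P$ for this class of homogeneous random constructions (via the countable-stability / localisation trick applied to the stage-$k$ subtrees, each itself an FFP with shifted probability sequence $\{p_{k+j}\}_j$) and from the definition of $\dim_A$ as a supremum over scales $k\leq t+k$ of the local exponential growth rate of the surviving population, which produces the $\sup_{k\geq 1}\log_m(\prod_{l=k}^{t+k}p_l)^{1/(t+1)}$ appearing in (iv).

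The hard part will be the a.s.\ lower bounds. In particular, upgrading the expected-count estimates to pathwise statements — both the $L^2$ energy estimate for the Hausdorff bound and the simultaneous control across \emph{all} subtrees needed for the packing and Assouad formulas — requires the standard but delicate martingale / Kahane-type second-moment analysis, together with Borel--Cantelli to handle the countable family of scales $m^{-k}$ and of starting levels $k\geq 1$. The remaining arithmetic reduction of the $\liminf$/$\limsup$ expressions to the formulas stated in (ii)--(iv) is bookkeeping once these concentration estimates are in place, and (v) then follows by comparing the two pairs of formulas on the same surviving event.
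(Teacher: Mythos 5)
Your part (i) is essentially the paper's own argument (the count of surviving stage-$k$ cubes is Binomial$((m^n)^k,\prod_{l=1}^k p_l)$, so the expected measure is $\prod_{l=1}^k p_l$, then pass to the limit), so that piece is fine. For (ii)--(v), however, you and the paper part ways: the paper does not prove these formulas from first principles at all --- it simply substitutes $d=n$, $M_k=m$, $N_k=p_k m^n$ into the general dimension formulas for this class of random Cantor sets established in the cited works of Chen and of Broman et al., and the entire content of the appendix proofs is the resulting algebra. You are instead proposing to reprove those cited results. That is a legitimate and more self-contained route in principle, and your covering/first-moment argument for the Hausdorff upper bound is correct (modulo working along a subsequence realizing the $\liminf$), but it commits you to a much larger project than the paper undertakes.

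As a proof, your sketch has concrete gaps exactly where the work lies. First, the a.s.\ lower bound for $\dim_H$: the energy method gives $\dim_H\geq s$ only on the event $\mu\neq 0$, i.e.\ with positive probability, and you still need a zero--one/self-similarity argument to upgrade this to almost surely on survival; you name the second-moment machinery but do not carry out the correlation estimate for varying $p_k$ (and, as written, your per-cube mass $(m^{-nk}\prod_{l=1}^k p_l)^{-1}$ is off --- the martingale measure assigns mass $m^{-nk}/\prod_{l=1}^k p_l$ to each surviving cube so the expected total is $1$). Second, and more seriously, nothing in your sketch produces the specific packing-dimension formula in (iii): the denominator $1+\frac{1}{n}\log_m(p_{k+1}^{1/(k+1)})$ comes from estimating covering numbers at intermediate scales $m^{-(k+1)}<\delta<m^{-k}$ for a Moran-type construction, not from ``the same cover used in (ii),'' and the identity $\dim_P=\dim_B^{upper}$ for every relatively open subset (needed for the localisation trick) is itself one of the substantive results of the cited papers. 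Third, the claim $\dim_B^{lower}=\dim_H$ a.s.\ is asserted via ``a Chebyshev argument'' but the lower box dimension of $C$ is governed by the cubes that survive to the limit, not by $N_k$, and this equality is again a theorem of the cited literature rather than bookkeeping. Either cite these results as the paper does, or be prepared to supply full proofs of them; as it stands the proposal is a plan for (ii)--(v), not a proof.
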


An immediate consequence of the Theorem \ref{Theoremp2} is the following:

\begin{cor}
\label{cor1}
The FFP constructed in Definition \ref{Def1} has a positive expected Lebesgue measure if and only if its Hausdorff dimension equals to $n,$ almost surely. 
\end{cor}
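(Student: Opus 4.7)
The plan is to derive the equivalence directly from parts (i) and (ii) of Theorem \ref{Theoremp2}. Part (i) identifies $E(\lambda_n(C_{n,m,\overrightarrow{p}}))$ with the infinite product $\prod_{k=1}^{\infty} p_k$, so positivity of the expected Lebesgue measure is exactly the condition $\prod_{k=1}^{\infty} p_k > 0$. Part (ii) identifies the almost sure Hausdorff dimension with $n + \log_m \alpha$, where $\alpha := \liminf_{k\to\infty}(\prod_{l=1}^{k} p_l)^{1/k}$, so ``$\dim_H(C_{n,m,\overrightarrow{p}}) = n$ almost surely'' is exactly the condition $\alpha = 1$. Thus the corollary reduces to the purely analytic statement that, for a non-decreasing sequence $\overrightarrow{p}$ in $(0,1]$,
\[
\prod_{k=1}^{\infty} p_k > 0 \;\Longleftrightarrow\; \alpha = 1.
\]

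For the forward implication I would use that $P := \prod_k p_k \le \prod_{l=1}^{k} p_l \le 1$ for every $k$ (because each $p_l \in (0,1]$); taking $k$-th roots and letting $k \to \infty$ squeezes the middle term between $P^{1/k} \to 1$ and $1$, forcing $\alpha = 1$. For the converse I would pass to logarithms and read $\alpha = 1$ as $\tfrac{1}{k}\sum_{l=1}^{k} \log p_l \to 0$; since $\overrightarrow{p}$ is non-decreasing, $\log p_l$ converges to some $\log p_\infty \le 0$, and Cesaro forces $p_\infty = 1$. A Stolz--Cesaro-type estimate on the partial sums of $-\log p_l$ should then deliver $\sum_l (1-p_l) < \infty$, equivalently $\prod_k p_k > 0$.

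I expect the converse to be the principal obstacle: extracting genuine summability of $1 - p_l$ from the mere vanishing of Cesaro averages of $\log p_l$ is not automatic and must rest crucially on the non-decreasing hypothesis built into Definition \ref{Def1}. The forward direction, by contrast, is a clean sandwich argument invoking only Theorem \ref{Theoremp2} and elementary inequalities.
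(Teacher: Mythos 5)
Your reduction of the corollary to the purely analytic equivalence $\prod_{k=1}^{\infty}p_k>0\Leftrightarrow\alpha=1$ is exactly the paper's route: the appendix proof likewise works from the representation $l=\prod_{k=1}^{\infty}p_k$ of Theorem \ref{Theoremp2}(i) together with the $\liminf$ formula of Theorem \ref{Theoremp2}(ii). Your forward implication is correct and is essentially the paper's ``sufficiency'' step: from $0<P\leq\prod_{l=1}^{k}p_l\leq 1$ one gets $P^{1/k}\leq(\prod_{l=1}^{k}p_l)^{1/k}\leq 1$ and hence $\alpha=1$ by squeezing.

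The gap is in the converse, and your own suspicion about it is well founded: the Stolz--Cesaro step you propose cannot be carried out. Vanishing of the Cesaro averages $\frac{1}{k}\sum_{l=1}^{k}(-\log p_l)\to 0$ does not imply $\sum_{l}(-\log p_l)<\infty$, even under the non-decreasing hypothesis of Definition \ref{Def1}. Concretely, take $p_k=e^{-1/k}$, which is an admissible non-decreasing sequence in $(0,1]$. Writing $H_k=\sum_{l=1}^{k}1/l$, one has $(\prod_{l=1}^{k}p_l)^{1/k}=e^{-H_k/k}\to 1$, so $\alpha=1$ and $\dim_H(C_{n,m,\overrightarrow{p}})=n$ almost surely by Theorem \ref{Theoremp2}(ii); yet $E(\lambda_n(C_{n,m,\overrightarrow{p}}))=e^{-\sum_k 1/k}=0$. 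So the ``if'' half of the corollary is not merely hard to prove by your method --- it is false as stated, and no repair of the Cesaro argument can rescue it. (The paper's own one-line $\epsilon$--$\delta$ sketch of the ``necessity'' direction runs into the same obstruction.) Only the ``only if'' direction --- positive expected measure forces dimension $n$ --- is valid, and that is in fact the only direction the paper subsequently uses (set $\#2$ of Table \ref{Table1} in Lemma \ref{Lemma3.2}).
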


We finish this section with some prominent examples useful for the key results in the subsequent section.

\begin{examp}
\label{example1}
Let the sequence of probabilities $\{ p_k\}_{k=1}^{\infty}$ be defined by $p_k=p^{a_k}$ where $m^{-n}<p<1$ is fixed and $a_k>0 (k\geq 1).$ Then, by above Theorem, it follows that:\newline\\
 (i)\ $E(\lambda_n(C_{n,m,\overrightarrow{p}})) = p^{\sum_{k=1}^{\infty} a_k},$\\
(ii) $\dim_H(C_{n,m,\overrightarrow{p}})=^{a.s} n+(\liminf_{k\rightarrow \infty}\frac{\sum_{l=1}^{k}a_l}{k})\times \log_m(p),$\\
(iii) $\dim_P(C_{n,m,\overrightarrow{p}})=^{a.s}  \limsup\limits_{k \rightarrow \infty}(\frac{n+\frac{\sum_{l=1}^{k+1}a_l}{k+1}\times \log_m(p)} {1+\frac{a_{k+1}}{k+1}\frac{1}{n}\log_m(p)}), $\\
(iv) $\dim_A(C_{n,m,\overrightarrow{p}})=^{a.s} n+ \limsup\limits_{t\rightarrow \infty} (\sup\limits_{k\geq 1} \frac{\sum_{l=k}^{k+t}a_l}{t+1}\times \log_m(p)).$\\
We will use this special example for the proof of Lemma \ref{Lemma3.2} in subsequent section.
\end{examp}

\begin{examp}
\label{example2}
Let in the Example \ref{example1}, $a_k=1 (k\geq 1).$ Then, the obtained random fractals are referred to Mandelbrot fractal perculations (MFP). In particular,   for $p>m^{-n}$  the MFP survives extinction, $E(\lambda_n(C_{n,m,\overrightarrow{p}}))=0,$ and $\dim_H(C_{n,m,\overrightarrow{p}})=\dim_P(C_{n,m,\overrightarrow{p}})=\dim_A(C_{n,m,\overrightarrow{p}})=\dim_B(C_{n,m,\overrightarrow{p}})=n+\log_m(p),$ almost surely.  We write for simplicity $\dim_F(C_{n,m,\overrightarrow{p}})=n+\log_m(p),$ almost surely.
\end{examp}

%%%%%%%%%%%%%%%%%%%%%%%%%%%%%%%%%%%%%%%%%%%%%%%%%%%%%%%%%%%%%%%%%%%%%%%%%%%%%%%%%%%%%%%%%%%%%%%%%%%%%%%%%%%%%%
 
\section{Main Results}

We generalize the existential Generalized Hausdorff Dimension Theorem(GHDT) from deterministic fractals and one fractal dimension to random fractals and four fractal dimensions. The existential cardinality is aleph-two, as well. The construction process is accomplished
in threer stages: (i) calculting the cardinality of the power set of a surviving random fractal; (ii)  showing the existence of continuum of random fractals with a plausible fractal dimension and expected Lebesgue measure in $n$ dimensional Euclidean space $\mathbb{R}^n(n \geq 1),$ and, (iii) generalizing the result in the second stage to the cardinal of aleph-two. We begin with the following Lemma of cardinality calculation which plays a key role in the SGHDT:
 
\begin{lem}
\label{Lemma3.1}
Given a random fractal $C_{n,m,\overrightarrow{p}}$ with almost surely positive Hausdorff dimension in the unit cube $I^n (n \geq 1).$ Then, the cardinality of its power set equals to aleph-two.
\end{lem}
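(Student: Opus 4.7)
The plan is to reduce the claim to a standard fact about uncountable closed subsets of Polish spaces together with the cardinal arithmetic identity $2^{2^{\aleph_0}}=\aleph_2$ used in \cite{I16} for the GHDT.

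First I would translate the hypothesis ``almost surely positive Hausdorff dimension'' into the survival regime of Theorem \ref{Theoremp1}. By Theorem \ref{Theoremp2}(ii), $\dim_H(C_{n,m,\overrightarrow{p}})=^{a.s} n+\log_m(\alpha)$ with $\alpha=\liminf_{k\to\infty}(\prod_{l=1}^{k} p_l)^{1/k}$, so the positivity hypothesis forces $\alpha>m^{-n}$. Theorem \ref{Theoremp1}(i) then guarantees that on the (positive-probability) survival event the set $C_{n,m,\overrightarrow{p}}$ is uncountable.

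Second I would exploit the topological structure of the FFP. By Definition \ref{Def1}, $C_{n,m,\overrightarrow{p}}=\bigcap_{k=0}^{\infty} C_{n,m,\overrightarrow{p}}(k)$ is an intersection of closed sets, hence a closed (in fact compact) subset of the Polish space $I^n$. By the Cantor--Bendixson theorem, every uncountable closed subset of a Polish space contains a nonempty perfect set, and therefore has cardinality exactly $2^{\aleph_0}$. Combined with the previous step this yields $|C_{n,m,\overrightarrow{p}}|=2^{\aleph_0}$ almost surely (consistent with Remark \ref{rem3}, where fractal-set characteristics of the random fractal are read off their almost-sure values).

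Third I would pass to power sets and apply the cardinal arithmetic already employed in the deterministic GHDT, writing $|\mathcal{P}(C_{n,m,\overrightarrow{p}})|=2^{|C_{n,m,\overrightarrow{p}}|}=2^{2^{\aleph_0}}$ and invoking the convention of \cite{I16} (i.e.\ $2^{\aleph_0}=\aleph_1$ and $2^{\aleph_1}=\aleph_2$) to conclude that this equals $\aleph_2$. The only real obstacle is making sure the same set-theoretic framework used in the GHDT is in force here, so that $2^{2^{\aleph_0}}$ collapses to $\aleph_2$ rather than to some larger cardinal; the rest is a routine chain of standard implications (survival $\Rightarrow$ uncountable $\Rightarrow$ closed uncountable $\Rightarrow$ cardinality continuum $\Rightarrow$ power set of cardinality $\aleph_2$).
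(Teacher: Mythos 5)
Your proposal is correct, but it reaches the conclusion by a noticeably different route than the paper. The paper's own proof is a two-line argument: it cites the fact that any countable set has Hausdorff dimension zero almost surely \cite{R1}, concludes that a set of almost surely positive Hausdorff dimension must be uncountable, and then passes immediately to ``power set has cardinality $\aleph_2$'' under GCH. You instead obtain uncountability by translating the dimension hypothesis into the survival condition $\alpha>m^{-n}$ via Theorem \ref{Theoremp2}(ii) and invoking Theorem \ref{Theoremp1}(i), and you then insert an extra step --- Cantor--Bendixson applied to the closed set $C_{n,m,\overrightarrow{p}}$ --- to pin the cardinality down to exactly $2^{\aleph_0}$ before applying $2^{2^{\aleph_0}}=\aleph_2$. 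The comparison is instructive: the paper's argument is shorter and does not depend on the set being closed, but it silently uses CH already at the passage from ``uncountable'' to ``cardinality of the continuum'' (an uncountable set of reals need not have cardinality $2^{\aleph_0}$ without CH); your perfect-set argument closes that gap in ZFC alone and confines the set-theoretic hypothesis to the single final identity $2^{2^{\aleph_0}}=\aleph_2$, at the cost of invoking two preliminary theorems where the paper needs only the elementary dimension fact. Both arguments are valid under the GCH convention the paper adopts.
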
 

\begin{proof}
Any countable event has Hausdorff dimension of zero, almost surely\cite{R1}. Hence, any event with almost surely positive Hausdorff dimension is uncountable. Accordingly, its power set has cardinality of aleph-two given generalized continuum hypothesis (GCH).
\end{proof} 
 
Next, equipped with prelimiary results in the previous section, we establish our first major result on existence of random fractals for given fractal dimension and Lebesgue measure. This result on its own is a direct generalization of the Hausdorff Dimension Theorem(HDT)\cite{I13} from the case of deterministic thin fractals to the case of random fat fractals, and, from one fractal dimension to four fractal dimensions:
 
\begin{lem}
\label{Lemma3.2} 
For any real $r>0$ and $l\geq 0$ there are continuum random fractals  with the Hausdorff dimension $r.1_{\{0\}}(l) +n.1_{(0,\infty)}(l),$ almost surely and the expected Lebesgue measure $l$   in $\mathbb{R}^n (\lceil r\rceil \leq n).$
\end{lem}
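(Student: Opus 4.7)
The plan is to handle the thin regime $l=0$ and the fat regime $l>0$ separately, in each case exhibiting a one-parameter continuum-indexed family of FFPs whose expected Lebesgue measure and almost sure Hausdorff dimension hit the prescribed targets.

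For $l=0$, I work in the unit cube $I_n$ and fix any integer $m\ge 2$. For each $p\in(m^{-n},1)$—already a continuum—I set $p_k:=p^{a_k(p)}$ with
\[
a_k(p):=\begin{cases}(n-r)/|\log_m p|,& 0<r<n,\\ 1/k,& r=n.\end{cases}
\]
Then $(p_k)$ is non-decreasing in $(0,1)$, and the survival condition $\alpha>m^{-n}$ of Theorem \ref{Theoremp1}(i) holds: $\alpha=p^{a(p)}=m^{r-n}>m^{-n}$ in the first sub-case precisely because $r>0$, and $\alpha=1$ in the second because $\sum_{l\le k}1/l=o(k)$. Example \ref{example1}(i) then gives $E(\lambda_n(C))=p^{\sum_k a_k}=0$ since $\sum_k a_k=\infty$; Example \ref{example1}(ii) gives
\[
\dim_H(C)=n+\Bigl(\liminf_k \tfrac{1}{k}\sum_{l\le k}a_l\Bigr)\log_m p,
\]
which evaluates to $r$ (constant Cesaro average in the first sub-case; liminf zero in the second). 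Distinct $p$ produce distinct sequences $\vec{p}$, hence distinct product measures $P_p$ on $\mathcal{F}_n$, and the family is of continuum cardinality.

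For $l>0$, I rescale via Remark \ref{rem1} to the cube $I_{n,0,b}$ with $b:=(l+1)^{1/n}$, so that $c:=l/b^n\in(0,1)$. Fix $m\ge 2$. For each $q\in[c,\sqrt{c}]$—a non-degenerate interval, hence a continuum—define $\vec{p}$ by $p_1=q$, $p_2=c/q$, and $p_k=1$ for $k\ge 3$. This sequence is non-decreasing in $(0,1]$, and $\alpha=\liminf_k c^{1/k}=1>m^{-n}$, so the FFP survives. Theorem \ref{Theoremp2}(i), combined with the scaling from Remark \ref{rem1}, gives $E(\lambda_n(C))=b^n \prod_k p_k = b^n c = l$; Theorem \ref{Theoremp2}(ii) yields $\dim_H(C)=n+\log_m\alpha=n$ almost surely, consistently with Corollary \ref{cor1} since $E(\lambda_n(C))>0$. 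Distinct $q$ produce distinct $\vec{p}$, hence distinct fractals.

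The main obstacle I expect is the verification that distinct continuous parameters yield genuinely distinct random fractals—distinct distributions on $\mathcal{F}_n$, not mere re-descriptions of a single distribution. This follows from the uniqueness clause in the product-measure construction of $P_p$ already used in the paper: different sequences $\vec{p}$ determine different finite-dimensional marginals on the subcube algebras $D_{n,m,k}$, hence different $P_p$, hence pairwise distinct random fractals $C_{n,m,\vec{p}}$. Once this point is settled, the lemma is immediate from the explicit formulas of Example \ref{example1} and Theorem \ref{Theoremp2}.
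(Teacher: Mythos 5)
Your overall strategy---reading off the dimension and the expected measure from Example \ref{example1} and Theorem \ref{Theoremp2} and exhibiting one-parameter, continuum-indexed families---is the same as the paper's, but your central sub-case collapses. For $l=0$ and $0<r<n$ you set $p_k=p^{a_k(p)}$ with $a_k(p)=(n-r)/|\log_m p|$. Since $|\log_m p|=-\log_m p$ for $p\in(0,1)$, this gives $\log_m p_k=a_k(p)\log_m p=-(n-r)=r-n$, i.e.\ $p_k=m^{r-n}$ for every $k$ and \emph{every} $p$. The normalization you built into $a_k(p)$ exactly cancels the variation of the base: all choices of $p\in(m^{-n},1)$ produce the identical constant sequence $\vec{p}=(m^{r-n},m^{r-n},\dots)$, hence the identical measure and the identical random fractal. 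Your claim that ``distinct $p$ produce distinct sequences $\vec{p}$'' is therefore false in exactly the sub-case that carries the classical content of the lemma ($l=0$, $r<n$): you have produced one random fractal, not a continuum. The $r=n$ sub-case (with $a_k=1/k$) and the $l>0$ construction do yield genuinely distinct sequences and correct values of the dimension and the measure.

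The gap is repairable, and the paper's own device shows how: keep the base $p=m^{r-n}$ fixed and perturb \emph{finitely many} exponents instead, e.g.\ $p_1=p^{a}$ and $p_k=p$ for $k\ge 2$, with $a\ge 1$ as the continuum parameter. A finite perturbation changes neither $\alpha=\liminf_{k}(\prod_{l=1}^{k}p_l)^{1/k}=p$ nor the vanishing of $\prod_k p_k$, so every member still has Hausdorff dimension $r$ almost surely and expected Lebesgue measure $0$, while distinct $a$ give distinct $p_1$ and hence, by the uniqueness-of-marginals argument you already invoke, pairwise distinct random fractals. (A secondary point: in your $l>0$ family one has $\beta=\prod_k p_k^{m^{nk}}>0$, so by Theorem \ref{Theoremp1}(ii) each member is almost surely a finite union of cubes; the paper's family $p_k=p^{a^{k-1}-a^k}$ can be restricted to $a\in[m^{-n},1)$ to force $\beta=0$ and empty interior, which is the safer choice if ``fractal'' is to mean more than ``set of full dimension and positive measure.'')
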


\begin{proof}
We consider two sets of random fractals  $C_{n,m,\overrightarrow{p}_{a}}$ where each has cardinality of continuum and its members survive extinction.  Table\ref{Table1} summarizes the key features of these two sets via applying Example \ref{example1}:

\begin{table}[H]
\caption{Two sets with cardinality of continuum of random fractals  in $\mathbb{R}^{n} (n\geq 1)$\label{Table1} }
\begin{threeparttable}
\begin{tabular}{ l|cccc } 
\hline
$C_{n,m,\overrightarrow{p}_{a}}$ & $\overrightarrow{p}_{a}=\{p_k(a)\}_{k=1}^{\infty} $ & $E(\lambda_n(.))$ & $\dim_{H}(.)$ & Constraints\\
\hline
$\#1$ &
$p_k(a)= p^{a 1_{\lbrace 1\rbrace}(k)+1_{[2,+\infty)}(k)}$
&
$0$&
$n+\log_m(p)$& 
$a \geq 1$\\
$\#2$ &
$p_k(a)= p^{(a^{k-1}-a^{k})} $&
$p$ &
$n$ &
$0<a<1$\\
\hline
\end{tabular}
\begin{tablenotes}
\item[] \small{Notes: $m^{-n}<p<1$ is taken as fixed. Values for the Hausdorff dimension are taken almost surely. Each constraint has the cardinality of continuum for the parameter $a.$ } 
\end{tablenotes}
\end{threeparttable}
\end{table}

Now, given plausible values for the Hausdorff dimension and the Lebesgue measure we have three major cases:\newline\\
(i) $0<r \notin \mathbb{N}, l=0:$\newline
Here, for $n\geq \lfloor r\rfloor+1=\lceil r\rceil,$ and $p=m^{r-n}$ for the sets $\#1$ in Table\ref{Table1} it follows that $\dim_{H}(C_{n,m,\overrightarrow{p}_{a}})=r,$ almost surely.\newline\\
(ii) $0<r \in \mathbb{N}, l=0:$\newline
Here, for $n\geq r= \lceil r\rceil$, by part (i) for the sequence $\{r-2^{-k} \}_{k=1}^{\infty}$ there is corresponding sequence $\{ C_{n,m,\overrightarrow{p}_{a}}^{(k)} \}_{k=1}^{\infty}$ such that $\dim_{H}(C_{n,m,\overrightarrow{p}_{a}}^{(k)})=r-2^{-k}, (k\geq 1),$ and $E(\lambda_n(C_{n,m,\overrightarrow{p}_{a}}^{(k)}))=0, (k\geq 1).$  Next, take $C=\cup_{k=1}^{\infty}C_{n,m,\overrightarrow{p}_{a}}^{(k)},$ then $\dim_{H}(C)=\sup_{1\leq k\leq \infty} (\dim_{H}(C_{n,m,\overrightarrow{p}_{a}}^{(k)}))=r,$ and $E(\lambda_n(C))=0.$ Finally, the assertion follows from the uncountability of the sets in part(i).\newline\\
(iii)$0<r \in \mathbb{N}, l>0:$\newline
Here, for $n\geq r= \lceil r\rceil$, and $p=\frac{l}{\lfloor l\rfloor+1}$ for the sets $\#2$ in Table\ref{Table1} it follows that 
$\dim_{H}((\lfloor l\rfloor+1)^{\frac{1}{n}}*C_{n,m,\overrightarrow{p}_{a}})=r,$ almost surely, and $E(\lambda_n((\lfloor l\rfloor+1)^{\frac{1}{n}}*C_{n,m,\overrightarrow{p}_{a}}))=l.$  \newline\\
Finally, we complete the proof by considering the fact that counstructed random fractals in cases (i),(ii) belong to the standard probability space $(\Omega_{n,0,1},\mathcal{F}_{n,0,1},P_{p,0,1})$ while those in case(iii) belong to generalized probability space $(\Omega_{n,a,b},\mathcal{F}_{n,a,b},P_{p,a,b}):\ \ a=0,b=(\lfloor l\rfloor+1)^{\frac{1}{n}}.$
\end{proof}

\begin{remark}
\label{rem3}
We can prove Lemma \ref{Lemma3.2} in an alternative method. In this method, set $a=1$ and $a=\frac{1}{2}$ for the sets of random fractals in Table \ref{Table1}, respectively. Then, consider the set $\mathcal{G}_n$ with cardinality of continuum of random fractals defined by $\mathcal{G}_n=\{\cup_{k\in I} (2^{-k}C_{n,m,\overrightarrow{p}_{a}}+(1-2^{k-1})1_n)| I\subseteq \mathbb{N}\  \text{is\ infinite} , 1_n=\sum_{i=1}^{n}e_i\  (n\geq 1)\}.$ Finally, the new proof is completed by considering a similar argument to the outlined three major cases mentioned in the first proof of the Lemma \ref{Lemma3.2}.
\end{remark}

The constructed random fractals in Lemma \ref{Lemma3.2} share one key feature with the deterministic ones in \cite{I16}: there is a continuum of the random fractals with the same fractal dimension and the same expected Lebesgue measure in
the Euclidean spaces  $\mathbb{R}^{n+1}-\mathbb{R}^n (n\geq 1)$   as those in $\mathbb{R}^n (n\geq 1).$ This is a direct consequence of considering $\Omega_{n,a,b}$  isomorphic to the subspace of $\Omega_{n,a,b} \times \{0\}  \subseteq \Omega_{n+1,a,b},$ for the contextual $a<b.$ Furthermore, as in the deterministic case, the result in Lemma \ref{Lemma3.2} is limited on providing the highest possible cardinal number of aleph-two. Our final result provides them:

\begin{thm}
\label{Theorem1}
\textbf{(The Second Generalized Hausdorff Dimension Theorem)} For any real $r > 0$ and $l\geq 0,$ there are aleph-two random fractals with the Hausdorff dimension $r.1_{\{0\}}(l)+n.1_{(0,\infty)}(l)$ almost surely,  and expected Lebesgue measure $l$  in $\mathbb{R}^{n}$ where $(\lceil r\rceil \leq n)$.
\end{thm}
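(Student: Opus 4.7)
The plan is to combine the two preceding lemmas: Lemma \ref{Lemma3.2} supplies a continuum of random fractals with the prescribed invariants, and Lemma \ref{Lemma3.1} lets me promote any one of them to an $\aleph_{2}$--family by attaching arbitrary subsets of a dimensionally smaller auxiliary fractal. Write $d=r\cdot 1_{\{0\}}(l)+n\cdot 1_{(0,\infty)}(l)$ for the target almost sure Hausdorff dimension.

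First, I would use Remark \ref{rem2} to host two FFPs side-by-side inside the ambient space $(\Omega_{n,0,2},\mathcal{F}_{n,0,2},P_{p,0,2})$: a main fractal $C\subseteq[0,1]^{n}$ built from Lemma \ref{Lemma3.2} (rescaled to $[0,1]^{n}$) with $\dim_{H}(C)=d$ almost surely and $E(\lambda_{n}(C))=l$, together with an auxiliary fractal $C_{0}\subseteq[1,2]^{n}$ chosen as a Mandelbrot fractal percolation from Example \ref{example2} with parameter $p_{0}\in(m^{-n},m^{d-n})$. This interval is always nonempty because $d>0$. The choice guarantees $0<\dim_{H}(C_{0})=n+\log_{m}(p_{0})<d$ almost surely (hence $\lambda_{n}(C_{0})=0$ almost surely), while ensuring $C_{0}$ survives extinction, so that $|\mathcal{P}(C_{0})|=\aleph_{2}$ by Lemma \ref{Lemma3.1}. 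Crucially, the hosting cubes are disjoint, so $C\cap C_{0}=\emptyset$ deterministically.

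Next, for each $S\in\mathcal{P}(C_{0})$ I would form $C_{S}:=C\cup S$ and verify three things. Injectivity of the map $S\mapsto C_{S}$ is immediate from the deterministic disjointness $C\cap C_{0}=\emptyset$, via $S=C_{S}\setminus C$. The dimensional identity $\dim_{H}(C_{S})=d$ almost surely follows from the sandwich $\dim_{H}(C)\leq\dim_{H}(C_{S})\leq\dim_{H}(C\cup C_{0})=\max(\dim_{H}(C),\dim_{H}(C_{0}))$ combined with our choice of $p_{0}$. The measure identity $E(\lambda_{n}(C_{S}))=l$ follows from the outer-measure bound $\lambda_{n}^{*}(S)\leq\lambda_{n}(C_{0})=0$ almost surely, which forces $\lambda_{n}(S)=0$ and hence $\lambda_{n}(C_{S})=\lambda_{n}(C)$ almost surely by disjointness. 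Taking expectations concludes.

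The main obstacle I anticipate is conceptual rather than computational: each $C_{S}$ must be recognised as a bona fide random fractal on the ambient probability space even though $S$ ranges over arbitrary (possibly non-Borel) subsets of $C_{0}$. This is precisely the subtlety that the deterministic GHDT of \cite{I16} handles, and the same resolution applies here: both target invariants of $C_{S}$ are pinched between the invariants of the genuinely measurable sets $C$ and $C\cup C_{0}$, which forces their almost sure values regardless of the set-theoretic complexity of $S$. The parallel statements for $\dim_{P}$, $\dim_{A}$, and $\dim_{B}$ then follow by the same sandwich argument, since the corresponding formulas in Theorem \ref{Theoremp2} are monotone under set inclusion and finitely stable under union.
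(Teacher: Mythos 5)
Your proposal follows essentially the same route as the paper: both attach the $\aleph_2$-many arbitrary subsets of an auxiliary surviving, measure-zero, low-dimensional percolation fractal (living in a disjoint adjacent cube via Remark \ref{rem2}) to a main fractal from Lemma \ref{Lemma3.2} carrying the prescribed dimension and expected measure, and then read off both invariants by the same max/sandwich arguments. The only slip is that for $l>1$ the main fractal cannot be rescaled into $[0,1]^n$ while keeping $E(\lambda_n(C))=l$; you need the ambient cube $[0,2(\lfloor l\rfloor+1)^{1/n}]^n$ exactly as in the paper's choice of $b$.
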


\begin{proof}
Let $r>0, l\geq 0,$ and fix $n\geq \lceil r\rceil.$ Then, by two applications of the Lemma \ref{Lemma3.2} we have the following families of random fractals where each has the minimum cardinality of continuum:

\begin{eqnarray}
\label{eqr4}
\mathcal{G}_1(r,l)&=& \Big{\{} G\in \mathcal{F}_{n,(\lfloor l\rfloor+1)^{\frac{1}{n}},2(\lfloor l\rfloor+1)^{\frac{1}{n}}}  \Big{|} \begin{matrix}
 \dim_{H}(G)&\leq ^{a.s}\frac{r}{2} \\
E(\lambda_n(G))&=0
\end{matrix}  \Big{\}}\nonumber\\
\mathcal{G}_2(r,l)&=&\Big{\{} G\in \mathcal{F}_{n,0,(\lfloor l\rfloor+1)^{\frac{1}{n}}}\Big{|}\begin{matrix}
\dim_{H}(G)&=^{a.s}r.1_{\{0\}}(l)+n.1_{(0,\infty)}(l)   \\
 E(\lambda_n(G))&=l  
\end{matrix}  \Big{\}}.
\end{eqnarray}
Next, by Remark \ref{rem2}, both family of events in equation \ref{eqr4} belong to the probability space $(\Omega_{n,0,2b},\mathcal{F}_{n,0,2b},P_{p,0,2b}) $ where $b=(\lfloor l\rfloor+1)^{\frac{1}{n}}$ . Also, an application of Lemma \ref{Lemma3.1} showes that the first family $\mathcal{G}_1(r,l)$ in equation (\ref{eqr4}) has cardinalty of aleph-two. Consequently,   the following family of random fractals has cardinality of aleph-two: 
\begin{equation}
\label{eqr}
\mathcal{G}(r,l)=\{G_1\cup G_2| G_1\in \mathcal{G}_1(r,l), G_2\in \mathcal{G}_2(r,l)\}. 
\end{equation}
\noindent Furthermore, let $G\in\mathcal{G}(r,l).$ Then, by definition, there are $G_i\in\mathcal{G}_i(i=1,2)$ such that: $G=G_1\cup G_2.$  Now, by conventional properties of the Hausdorff dimension : 
\begin{eqnarray}
\dim_{H}(G)&=&\max(\dim_{H}(G_1),\dim_{H}(G_2))\nonumber\\
&=&\dim_{H}(G_2)=^{a.s}r.1_{\{0\}}(l)+n.1_{(0,\infty)}(l).
\end{eqnarray}
Also, we have:
\begin{eqnarray}
E(\lambda_n(G))&=&E(\lambda_n(G_1\cup G_2))\geq  E(\lambda_n(G_2))=l\nonumber \\
&=& E(\lambda_n(G_1))+ E(\lambda_n(G_2))\geq E(\lambda_n(G_1\cup G_2))=E(\lambda_n(G))\nonumber
\end{eqnarray}
yielding:
\begin{eqnarray}
E(\lambda_n(G))&=& l.
\end{eqnarray}
This completes the proof.
\end{proof}

\begin{remark}
Using Theorem\ref{Theoremp2} and Example\ref{example1} the results in Lemma \ref{Lemma3.2} and, consequently, Theorem\ref{Theorem1} hold for the other three  fractal dimensions as well.
\end{remark}

As in the deterministic case, the assertion in Theorem \ref{Theorem1} and its proof methodology have two immediate important corollaries: First, the cardinality of the set of random fractals in $\mathbb{R}^n$ is aleph-two.  Second, the cardinality of the set of random non-fractals in $\mathbb{R}^n$ is aleph-two as well. Both results are proved similar to the determinist case \cite{I16} and yield  from two applications of Cantor–Schroder–Bernstein theorem\cite{R2}, respectively.
\section{Discussion}
This work presented an existence theorem for random fractals of a given Hausdorff dimension
and a Lebesgue measure with the highest possible cardinal number of aleph-two. In addition, it generalized the former detrministic existence theorem in terms of the  randomness and the number of fractal dimensions. To compare the generalization process from HDT to GHDT and from GHDT to SGHDT we summarize these results as follows:

\begin{tabular}{ p{1.25cm}  p{13.5cm} }
\textbf{HDT}& For any real $r > 0,$
there is a continuum of thin deterministic fractals with Hausdorff dimension $r$ in n-dimensional
Euclidean space $(\lceil r\rceil \leq n)$.\\
\textbf{GHDT} &  For any real $r > 0$ and $l\geq 0,$ there are aleph-two deterministic fractals with the Hausdorff dimension $r.1_{\{0\}}(l)+n.1_{(0,\infty)}(l)$  and  Lebesgue measure $l$ in $\mathbb{R}^{n}$ where $(\lceil r\rceil \leq n)$.   \\
\textbf{SGHDT} &  For any real $r > 0$ and $l\geq 0,$ there are aleph-two random fractals with the fractal dimension $r.1_{\{0\}}(l)+n.1_{(0,\infty)}(l)$ almost surely,  and expected Lebesgue measure $l$  in $\mathbb{R}^{n}$ where $(\lceil r\rceil \leq n)$. Here, the fractal dimension is one of four dimensions: Hausdorff dimension, Packing dimension, Assouad dimension, Box dimension.
\end{tabular}

Figure.1 presents the generalization process upon comparing the above three statements.
 
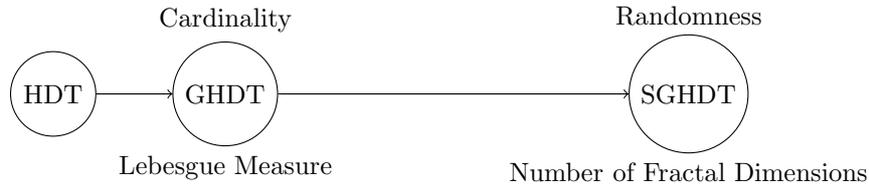
\begin{figure} [H]
\begin{center}
\begin{tikzpicture}
\draw (1,1) node[anchor=  east,circle,
draw](1){HDT} -- (2,1) node[anchor=  west,
circle,draw,
label=above:Cardinality,label=below:
Lebesgue Measure](2){GHDT};
\draw (3.45,1) --(8,1) node[anchor=  west,
circle,draw,label=above:Randomness,label=below:
Number of Fractal Dimensions](3){SGHDT};
\draw [->] (1) -- (2);
\draw [->] (2) -- (3);
\end{tikzpicture}
\end{center} 
\caption{The generalization process of the Hausdorff Dimension Theorem(HDT) in terms of cardinality, Lebesgue measure, randomness, and number of fractal dimensions} \label{fig:M1}
\end{figure}

This work’s contributions to the fractal geometry literature are in four perspectives: First, as in deterministic case, it highlights the advantages of random Cantor fractals on other conventional random fractals on establishing existential results for random fractals. Second, as in deterministic case, the main result is equipped with constructive proof rather than pure existential proof. Third, it presents two more sets with cardinality of aleph-two. Finally, it partially answers open problems \#1 and \#3 in \cite{I16}. For the case of open problem \#1, the dimension of the set of distinctive random fractals in $\mathbb{R}^n$ is the same for four fractal dimensions considered in this work. Moreover, for the case of open problem \#3, the cardinality of the set of all distinctive fractals in $\mathbb{R}^n$ is independent from their determinitic or random nature.  The work limitation -as in the deterministic case- is its limited generalizability for more generalized and abstract fractal structures and their fractal dimensions\cite{D1}. 

\section{Conclusion}  
This work has presented another partial solution to the problem of retrospective existence of any set of fractals for a given fractal dimension and the Lebesgue measure for the case of random fractals. It presented the case for random fractals with cardinality of aleph-two, and expanded the former deterministic result in terms of randomness and the number of involved fractals dimensions. Finally, it cleared the way for working through the problem for the  case of  more generalized  abstract fractal structure and dimension.

\subsection*{Abbreviations}
\noindent The following abbreviations are used in this manuscript:\\

\noindent 
\begin{tabular}{@{}ll}
\text{a.s} & \text{Almost Surely }\\
\text{FFP} & \text{Fat Fractal Perculation }\\
\text{HDT} & \text{Hausdorff Dimension Theorem }\\
\text{GCH} & \text{generalized continuum hypothesis}\\ 
\text{GHDT} & \text{Generalized Hausdorff Dimension Theorem}\\
\text{MFP}&\text{Mandelbrot Fractal Perculation}\\
\text{SGHDT} & \text{Second Generalized Hausdorff Dimension Theorem}
\end{tabular}
%%%%%%%%%%%%%%%%%%%%%%%%%%%%%%%%%%%%%%%%%%%%%%%%%%%%%%%%%%%%%%%%%%%%%%%%%%%%%%%%%%%%%%%%%%%

\appendix
\section{Some Ancillary Proofs}
\subsection{Proof for Theorem \ref{Theoremp1}. (i)}
 This is immediate consequence of Theorem \ref{Theoremp2} (ii) and properties of the Hausdorff dimension \cite{P5}. 
\subsection{Proof for Theorem \ref{Theoremp2}. (i)}
 Let $k\geq 1$ be fixed. Then, by conditioning and mathematical induction the number of subcubes $X_k$ in  $C_{n,m,\overrightarrow{p}}(k)$ satisfies $X_k\sim $ \\ $ Binomial((m^n)^k, \prod_{l=1}^{k}p_l).$ But, each subcube has length of $m^{-k}$ and the Lebesgue measure of $(m^n)^{-k}.$ Hence, $E(\lambda_n(C_{n,m,\overrightarrow{p}}(k)))=E(X_k.(m^n)^{-k})= \prod_{l=1}^{k}p_l , (k\geq 1).$ Now, the result follows from an application of the Fubini's Theorem and letting $k\rightarrow +\infty.$   
\subsection{Proof for Theorem \ref{Theoremp2}. (ii)}
 Let $d=n,$ $M_k=m$ and $N_k=p_k*m^n(k\geq 1)$ as in  \cite{P1,P2}. Then:  $\dim_H(C_{n,m,\overrightarrow{p}})=\liminf\limits_{k\rightarrow \infty} \frac{\log(\prod_{l=1}^{k}N_l)}{-\log(\prod_{l=1}^{k}M_l^{-1})}
=\liminf\limits_{k\rightarrow \infty}  \frac{\log(\prod_{l=1}^{k}p_l (m^k)^n)}{ \log(m^k)}
=\liminf\limits_{k\rightarrow \infty} (n+ \frac{\log(\prod_{l=1}^{k}p_l )}{ \log(m^k)})
=\liminf\limits_{k\rightarrow \infty} (n+ \frac{\log((\prod_{l=1}^{k}p_l )^{\frac{1}{k}})}{ \log(m)})
=n+\liminf\limits_{k\rightarrow \infty}  \log_m((\prod_{l=1}^{k}p_l)^{\frac{1}{k}}) 
=  n+   \log_m(\liminf\limits_{k\rightarrow \infty}(\prod_{l=1}^{k}p_l)^{\frac{1}{k}}).
$
\subsection{Proof for Theorem \ref{Theoremp2}. (iii)}
 Similar to the case for Hausdorff dimension, let $d=n,$ $M_k=m$ and $N_k=p_k*m^n(k\geq 1)$ as in  \cite{P1,P2}. Then:  $\dim_P(C_{n,m,\overrightarrow{p}})=\limsup\limits_{k\rightarrow \infty} \frac{\log(\prod_{l=1}^{k+1}N_l)}{-\log(\prod_{l=1}^{k}M_l^{-1})+\frac{1}{n}\log(N_{k+1})}=\limsup\limits_{k\rightarrow \infty}(\frac{\log(\prod_{l=1}^{k+1}p_l(m^{n(k+1)}))}
{\log(m^k)+\frac{1}{n}\log(p_{k+1}m^n)})=$  $\limsup\limits_{k\rightarrow \infty} (\frac{n(k+1)\log(m)+\log(\prod_{l=1}^{k+1}p_l)}{(k+1)\log(m)+\frac{1}{n}\log(p_{k+1})})=\limsup\limits_{k\rightarrow \infty} (\frac{n+\frac{\log(\prod_{l=1}^{k+1}p_l)}{(k+1)\log(m)}}{1+\frac{\log(p_{k+1})}{n(k+1)\log(m)}})=$\\$\limsup\limits_{k\rightarrow \infty} (\frac{n+ \log_{m}(\prod_{l=1}^{k+1}p_l)^{\frac{1}{k+1}}}{1+\frac{1}{n}\log_{m}(p_{k+1}^{\frac{1}{k+1}})} ) .$  

\subsection{Proof for Theorem \ref{Theoremp2}. (iv)}
 Similar to the case for Hausdorff dimension, let $d=n,$ $M_k=m$ and $N_k=p_k*m^n(k\geq 1)$ as in  \cite{P1,P2}. Then:  $\dim_A(C_{n,m,\overrightarrow{p}})= \limsup\limits_{t\rightarrow \infty} (\sup\limits_{k\geq 1}(\frac{\prod_{l=k}^{k+t}N_l}{-\log(\prod_{l=k}^{k+t}M_l^{-1})} ))=\limsup\limits_{t\rightarrow \infty} (\sup\limits_{k\geq 1}\frac{\log(\prod_{l=k}^{k+t}(p_l.m^n))}{\log(\prod_{l=k}^{k+t}m)})=$  $ \limsup\limits_{t\rightarrow \infty}(\sup\limits_{k\geq 1}\frac{\log(\prod_{l=k}^{k+t}p_l.(m^{n(t+1)}))}{\log(m^{t+1})})=\limsup\limits_{t\rightarrow \infty}(\sup\limits_{k\geq 1}\frac{\log(\prod_{l=k}^{k+t}p_l)+n \log(m^{t+1})}{\log(m^{t+1})} )=$ \\ $\limsup\limits_{t\rightarrow \infty}(\sup\limits_{k\geq 1}\frac{\log(\prod_{l=k}^{k+t}p_l)} {(t+1)\log(m) }+n)=\limsup\limits_{t\rightarrow \infty}(\sup\limits_{k\geq 1}\log_{m}((\prod_{l=k}^{k+t}p_l)^{\frac{1}{t+1}}) +n)= $  $ n+
 \limsup\limits_{t\rightarrow \infty} (\sup\limits_{k\geq 1}(\log_m (\prod_{l=k}^{t+k}p_l)^{\frac{1}{t+1}})).$ 

\subsection{Proof for Theorem \ref{Theoremp2}. (v)}
 This is direct consequence from  \cite{P1}.

\subsection{Proof for Corollary \ref{cor1} }
 It is sufficient to use the representation $l=\prod_{k=1}^{\infty}p_k.$ First, assume $l>0.$ Then, using the Calculus Theorem on limit of composition of continuous functions and the representation in part (ii)  the sufficiency is proved. Second, assume $l=0.$ Then, using $\epsilon-\delta$ definition for the sequences and taking $\epsilon=\frac{1}{2},$ the necessity is proved.

\end{document}